\tikzset{
    partial ellipse/.style args={#1:#2:#3}{
    insert path={+ (#1:#3) arc (#1:#2:#3)}
    }
}
\newtheorem{prethm}{{\bf Theorem}}
\newenvironment{thm}{\begin{prethm}{\hspace{-0.5
               em}{\bf.}}}{\end{prethm}}
\newtheorem{prepro}[prethm]{Proposition}
\newtheorem{prelem}[prethm]{Lemma}
\newenvironment{lem}{\begin{prelem}{\hspace{-0.5
               em}{\bf.}}}{\end{prelem}}
\newtheorem{precor}[prethm]{Corollary}
\newenvironment{cor}{\begin{precor}{\hspace{-0.5
               em}{\bf.}}}{\end{precor}}
\newtheorem{preobs}[prethm]{Observation}
\newtheorem{preremark}{{\bf Remark}}
\newtheorem{preexample}{{\bf Example}}
\newenvironment{example}{\begin{preexample}\em{\hspace{-0.5
               em}{\bf.}}}{\end{preexample}}
\newtheorem{preproblem}{{\bf problem}}
\newtheorem{preproof}{{\bf Proof.}}
\newenvironment{proof}[1]{\begin{preproof}{\rm
               #1}\hfill{$\Box$}}{\end{preproof}}
\renewcommand{\thefootnote}
\newcommand{\QEDmark}{\mbox{\textsc{qed}}}
\newcommand{\proofStarter}[1]{\textsc{#1} }
\newcommand{\vertex}{\node[vertex]}
\tikzstyle{vertex}=[circle, draw, inner sep=0pt, minimum size=4pt]
\tikzstyle{vertexa}=[circle, draw, inner sep=0pt, minimum size=2pt]
\begin{document}

\date{}
\title{\bf The multiplicity of the Laplacian eigenvalue $2$ in some bicyclic graphs}

\author{{
{\small Masoumeh Farkhondeh}$^{a}$, \small Mohammad Habibi$^{a}$,
\small Doost Ali Mojdeh$^{b}$, \small Yongsheng Rao$^{c}$}
 \\{\small $^{a}$Department of Mathematics,}
 \\{\small  Tafresh University, Tafresh, 39518-79611, Iran,}
 \\{\small $^{b}$Department of Mathematics,}
 \\{\small University of Mazandaran, Babolsar, 47416-95447, Iran}
\\{\small $^{c}$Guangzhou University, Guangzhou, 510006, China}
 \\
{\tt $^{a}$mfarkhondeh81@gmail.com},\\ {\tt  $^{a}$
habibi.mohammad2@gmail.com},\\ {\tt  $^{b}$damojdeh@umz.ac.ir},\\
{\tt  $^{c}$rysheng@gzhu.edu.cn}}

\date{}
\maketitle
\begin{abstract}
The Laplacian matrix of a graph $G$ is denoted by $L(G)=D(G)-A(G)$,
where $D(G)=diag(d(v_{1}),\ldots , d(v_{n}))$ is a diagonal matrix
and $A(G)$ is the adjacency matrix of $G$. Let $G_1$ and $G_2$ be
two graphs. A one-edge connection of two graphs $G_1$ and $G_2$ is a
graph $G=G_1\odot_{uv} G_2$ with $V(G)=V(G_1)\cup V(G_2)$ and $E(G)=
E(G_1)\cup E(G_2)\cup \{e=uv\}$, where $u\in V(G_1)$ and $v\in
V(G_2)$. We investigate the multiplicity of the Laplacian eigenvalue
$2$ of $G_1\odot_{uv} G_2$, while the unicyclic graphs $G_1$ and
$G_2$ have $2$ among their Laplacian eigenvalues, by using their
Laplacian characteristic polynomials. Some structural conditions
ensuring the presence of the existence $2$ in the $G=G_1\odot_{uv} G_2$
 where both $G_1$ and $G_2$ have $2$ as Laplacian eigenvalue,  have been
investigated, while, here we study the
existence Laplacian eigenvalue $2$ in $G=G_1\odot_{uv} G_2$ where at most one of
$G_1$ or $G_2$ has $2$ as Laplacian eigenvalue.

\end{abstract}

{\textbf{AMS 2010 Subject Classification:}}  05C50; 11C08; 15A18.\\

{\textbf{Keywords:}} Laplacian eigenvalue, characteristic
polynomial, multiplicity, unicyclic graph, bicyclic graph.

\section{Introduction}
\indent All graphs in this paper are finite and undirected with no
loops or multiple edges. Let $G$ be a graph with $n$ vertices. The
vertex set and the edge set of $G$ are denoted by $V(G)$ and $E(G)$,
respectively. The \textit{Laplacian matrix} of $G$ is
$L(G)=D(G)-A(G)$, where $D(G)=diag(d(v_{1}),\ldots , d(v_{n}))$ is a
diagonal matrix and $d(v)$ denotes the degree of the vertex $v$ in
$G$ and $A(G)$ is the adjacency matrix of $G$. We shall use the
notation $\lambda_k(G)$ to denote the $k^{\mathrm{th}}$ Laplacian
eigenvalue of the graph $G$ and we assume that $\lambda_{1}(G)\geq
\cdots \geq \lambda_{n}(G)=0$. Also, the multiplicity of the
eigenvalue $\lambda$ of $L(G)$ is denoted by $m_{G}(\lambda)$. A
vertex of degree one is called a \textit{leaf} vertex and a vertex
is said \textit{quasi leaf} (support vertex) if it is incident to a
leaf vertex. Connected graphs in which the number of edges equals
the number of vertices are called \textit{unicyclic graphs}.
Therefore, a unicyclic graph is either a cycle or a cycle with some
attached trees. Let $\mathfrak{U}_{n,g}$ be the set of all unicyclic
graphs of order $n$ with girth $g$. Throughout this paper, we
suppose that the vertices of the cycle $C_g$ are labeled by
$v_1,...,v_g$, ordered in a natural way around $C_g$, say in the
clockwise direction. A \textit{rooted tree} is a tree in which one
vertex has been designated the root. Furthermore, assume that
$T_{i}$ is a rooted tree of order $n_{i}\geq 1$ attached to
$v_{i}\in V(T_{i})\cap V(C_{g})$, where $\sum_{i=1}^{g}n_{i}=n$.
This unicyclic graph is denoted by $C(T_{1},...,T_{g})$. The
\textit{sun graph} of order $2n$ is a cycle $C_n$ with an edge
terminating in a leaf vertex attached to each vertex that is the
corona of $C_n\circ K_1$. A \textit{broken sun graph} is a unicyclic
subgraph of a sun graph, so one can assume a sun graph is a broken
sun graph too. A \textit{one-edge connection} of two graphs $G_1$
and $G_2$ is a graph $G=G_1\odot_{uv} G_2$ with $V(G)=V(G_1)\cup
V(G_2)$ and $E(G)= E(G_1)\cup E(G_2)\cup \{e=uv\}$, where $u\in
V(G_1)$ and $v\in V(G_2)$. We use the notation $u\sim v$ when
$e=uv\in E(G)$.

\indent By \cite [Theorem~13]{mieghem}, due to Kelmans and
Chelnokov, the Laplacian coefficient, $\xi_{n-k}$, can be expressed
in terms of subtree structures of $G$, for $0\leq k\leq n$. Suppose
that $F$ is a spanning forest of $G$ with components $T_{i}$ of
order $n_{i}$, and $\gamma(F)=\Pi_{i=1}^{k}n_{i}$. The
\textit{Laplacian characteristic polynomial} of $G$ is denoted by $
L_{G}(\lambda)= det(\lambda I-L(G)) =
\Sigma_{i=0}^{n}(-1)^{i}\xi_{i}\lambda^{n-i}$. If $M$ is a square
matrix, then the determinant of $M$ is denoted by $|M|$ and the
\textit{minor} of the entry in the $i^{\mathrm{th}}$ row and
$j^{\mathrm{th}}$ column is the determinant of the submatrix formed
by deleting the $i^{\mathrm{th}}$ row and $j^{\mathrm{th}}$ column.
This number is often denoted by $M_{i,j}$. A square matrix is
non-singular if its determinant is non-zero or it has an inverse.
\par
Let $G$ be a graph with $n$ vertices. It is convenient to adopt the
following terminology from \cite{fiedler}: for a vector
$X=(x_{1},\ldots,x_{n})^t\in \mathbb{R}^{n}$, we say $X$ gives a
valuation of the vertex of $V$, and with each vertex $v_{i}$ of $V$,
we associate the number $x_{i}$, which is the value of the vertex
$v_{i}$, that is $x(v_{i})=x_{i}$. Then $\lambda$ is an eigenvalue
of $L(G)$ with the corresponding eigenvector
$X=(x_{1},\ldots,x_{n})$ if and only if $X\neq 0$ and
\begin{equation}\label{one}
(d(v_{i})-\lambda)x_{i}=\sum_{v_{j}\in N(v_{i})} x_{j}\,\,
\mathrm{for}\,\,\mathrm{all}\,\, i=1,\ldots,n.
\end{equation}

In this article, we would like to study the eigenvalue $2$ in
bicyclic graphs with just $2$ cycles. The main Theorem is about the
multiplicity of the Laplacian eigenvalue $2$ of $G_1\odot_{uv} G_2$,
while the unicyclic graphs $G_1$ and $G_2$ have $2$ among their
Laplacian eigenvalues, by using their Laplacian characteristic
polynomials. By addition, we investigate the existence Laplacian
eigenvalue $2$ in $G=G_1\odot_{uv} G_2$ where at most one of $G_1$
or $G_2$ has $2$ as Laplacian eigenvalue, by using Equation
(\ref{one}).
\section{Main results}

 We start this section by studying the multiplicity
of the Laplacian eigenvalue $2$ of $G_1\odot_{uv} G_2$, where the
unicyclic graphs $G_1$ and $G_2$ have $2$ among their Laplacian
eigenvalues. For this, we need the following results.


\begin{lem}\label{determin}\emph{\cite[Lemma~2.2]{cvetkovic et
al.}} If $M$ is a non-singular square matrix, then
\begin{equation}\label{det}
det\left(%
\begin{array}{cc}
  M & N \\
  P & Q \\
\end{array}%
\right)=\begin{vmatrix}
M & N \\
P & Q
\end{vmatrix}=\mid M\mid.\mid Q-PM^{-1}N\mid
\end{equation}
\end{lem}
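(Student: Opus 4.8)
The plan is to reduce the block determinant to determinants of triangular blocks through a Schur-complement factorization. Since $M$ is non-singular, the inverse $M^{-1}$ exists, and I would begin by checking the identity
\[
\left(
\begin{array}{cc}
M & N \\
P & Q
\end{array}
\right)
=
\left(
\begin{array}{cc}
I & 0 \\
PM^{-1} & I
\end{array}
\right)
\left(
\begin{array}{cc}
M & N \\
0 & Q-PM^{-1}N
\end{array}
\right),
\]
where the top-left and bottom-right identity blocks $I$ have the sizes of $M$ and $Q$, respectively. This is verified by direct block multiplication: the first row of blocks returns $M$ and $N$; the $(2,1)$ block gives $PM^{-1}M=P$; and the $(2,2)$ block gives $PM^{-1}N+(Q-PM^{-1}N)=Q$.

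Next I would take determinants on both sides and apply multiplicativity, $\det(AB)=\det(A)\det(B)$. This leaves two triangular-block determinants to evaluate: the first factor is block lower triangular with identity blocks along its diagonal, and the second is block upper triangular with diagonal blocks $M$ and $Q-PM^{-1}N$. Using the standard facts that
\[
\det\left(
\begin{array}{cc}
A & B \\
0 & D
\end{array}
\right)=|A|\,|D|,
\qquad
\det\left(
\begin{array}{cc}
I & 0 \\
X & I
\end{array}
\right)=1,
\]
the lower-triangular factor contributes $1$ and the upper-triangular factor contributes $|M|\,|Q-PM^{-1}N|$, so the product is exactly $|M|\,|Q-PM^{-1}N|$, which is the asserted formula.

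The only step requiring genuine care is the block-triangular determinant formula invoked above. I would justify it by the Leibniz permutation expansion: for the block upper-triangular matrix, any permutation contributing a nonzero signed product must map the index set of the first diagonal block into itself, and this factorizes the signed sum into the product of the two diagonal-block determinants; the unipotent factor is lower triangular as an ordinary matrix with all diagonal entries equal to $1$, hence has determinant $1$. Everything else is routine block bookkeeping, so I expect no serious obstacle once the sizes of the identity blocks are pinned down correctly.
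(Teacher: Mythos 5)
Your proof is correct. Note, however, that the paper offers no proof of this lemma at all --- it is quoted verbatim from the literature (Lemma~2.2 of Cvetkovi\'c--Doob--Sachs), so there is no in-paper argument to compare against. What you have written is the standard derivation: the block factorization
\[
\left(
\begin{array}{cc}
M & N \\
P & Q
\end{array}
\right)
=
\left(
\begin{array}{cc}
I & 0 \\
PM^{-1} & I
\end{array}
\right)
\left(
\begin{array}{cc}
M & N \\
0 & Q-PM^{-1}N
\end{array}
\right)
\]
is verified correctly, multiplicativity of the determinant is applied correctly, and your justification of the block-triangular determinant formula via the Leibniz expansion is sound. The one point worth stating explicitly is that the non-singularity hypothesis on $M$ is used exactly once --- to guarantee that $M^{-1}$ exists so the factorization can be written down at all --- which your argument does acknowledge. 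Nothing is missing.
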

We use the notation $M_u$ when in $M_{i,i}, i$ is the row and column
corresponding to the vertex $u$ in a graph.
\begin{lem}\label{character}\emph{\cite[Lemma~8]{guo}}
Let $G_1$ and $G_2$ be two graphs of order $n$ and $m$,
respectively. Then the Laplacian characteristic polynomial of
$G=G_1\odot_{uv} G_2$ is
\begin{equation}\label{laplac}
L_{G}(\lambda)=M.Q-M.Q_v-Q.M_u
\end{equation}
 where $M=\lambda I-L(G_1)$ and $Q=\lambda I-L(G_2)$.
\end{lem}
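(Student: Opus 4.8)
The plan is to realize $\lambda I-L(G)$ as a rank-one perturbation of the block-diagonal matrix built from $M$ and $Q$, and then apply the determinant formula for such perturbations. First I would observe that attaching the single edge $e=uv$ to the disjoint union $G_1\cup G_2$ alters the Laplacian only through the edge Laplacian of $uv$, which is the rank-one matrix $ww^{t}$ with $w=e_u-e_v\in\mathbb{R}^{n+m}$ (here $e_u$ sits in the block indexed by $V(G_1)$ and $e_v$ in the block indexed by $V(G_2)$). Consequently, writing $A=\mathrm{diag}(M,Q)$ where $M=\lambda I-L(G_1)$ and $Q=\lambda I-L(G_2)$, one has
\begin{equation*}
\lambda I-L(G)=\begin{pmatrix} M & 0 \\ 0 & Q \end{pmatrix}-ww^{t}=A-ww^{t}.
\end{equation*}

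The key step is the rank-one update identity for determinants, $\det(A-ww^{t})=\det A-w^{t}\,\mathrm{adj}(A)\,w$, which I would deliberately use in its adjugate form in order to sidestep any invertibility hypothesis on $A$: this is a polynomial identity in the matrix entries and hence is valid for every value of $\lambda$. Here $\det A=\mid M\mid\cdot\mid Q\mid$, which under the paper's notational convention is exactly the product $M\cdot Q$ of the two Laplacian characteristic polynomials.

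Next I would evaluate $w^{t}\,\mathrm{adj}(A)\,w$. Since $A$ is block diagonal, its adjugate is block diagonal as well, so the cross terms coming from $e_u$ (first block) and $e_v$ (second block) vanish, leaving $w^{t}\,\mathrm{adj}(A)\,w=\mathrm{adj}(A)_{uu}+\mathrm{adj}(A)_{vv}$. The entry $\mathrm{adj}(A)_{uu}$ is the $(u,u)$-cofactor of $A$; being a diagonal cofactor it carries sign $+1$, so it equals the principal minor of $A$ obtained by deleting the row and column of $u$, namely $M_u\cdot\mid Q\mid=M_u\cdot Q$. Symmetrically $\mathrm{adj}(A)_{vv}=M\cdot Q_v$. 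Substituting everything back yields
\begin{equation*}
L_{G}(\lambda)=\det(A-ww^{t})=M\cdot Q-M\cdot Q_v-Q\cdot M_u,
\end{equation*}
which is precisely the asserted identity~(\ref{laplac}).

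The one place requiring genuine care — and the step I would flag as the main obstacle — is the bookkeeping: confirming that the perturbation is exactly the rank-one $ww^{t}$ with the correct signs in the $(u,u)$, $(v,v)$, and cross positions; checking that the off-diagonal (cross-block) contributions to $w^{t}\,\mathrm{adj}(A)\,w$ genuinely disappear because $A$ is block diagonal; and verifying that the diagonal cofactors have sign $+1$ so that they coincide with the minors $M_u$ and $Q_v$ exactly as these are defined. An alternative to the adjugate identity would be a direct multilinearity/cofactor expansion of $\det(A-ww^{t})$ along the two modified coordinates, which produces the same three terms; I would nonetheless prefer the adjugate version, since it makes the invertibility issue vanish cleanly and isolates the combinatorics into the single observation that deleting $u$ (resp.\ $v$) decouples into $M_u\cdot Q$ (resp.\ $M\cdot Q_v$).
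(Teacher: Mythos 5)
The paper does not prove this lemma at all: it is imported verbatim as \cite[Lemma~8]{guo}, so there is no in-paper argument to compare against. Your proof is correct and self-contained. The decomposition $L(G)=\mathrm{diag}(L(G_1),L(G_2))+ww^{t}$ with $w=e_u-e_v$ is exactly right (the edge Laplacian of $uv$ contributes $+1$ at $(u,u)$ and $(v,v)$ and $-1$ at the cross positions), the adjugate form $\det(A-ww^{t})=\det A-w^{t}\,\mathrm{adj}(A)\,w$ is a polynomial identity so no invertibility is needed, the adjugate of a block-diagonal matrix is block diagonal (hence the cross terms $\mathrm{adj}(A)_{uv}$ and $\mathrm{adj}(A)_{vu}$ vanish), and the diagonal cofactors carry sign $+1$, giving $\mathrm{adj}(A)_{uu}=M_u\cdot\lvert Q\rvert$ and $\mathrm{adj}(A)_{vv}=\lvert M\rvert\cdot Q_v$. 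This assembles precisely into $L_G(\lambda)=M\cdot Q-M\cdot Q_v-Q\cdot M_u$, with the understanding (consistent with the paper's somewhat abusive notation) that $M$ and $Q$ in the formula denote the determinants $\lvert M\rvert$ and $\lvert Q\rvert$. The classical derivation in the cited source proceeds by cofactor expansion along the row of the added edge, which you correctly identify as an equivalent alternative; your adjugate version is arguably cleaner since it isolates the only combinatorial content in the observation that deleting $u$ (resp.\ $v$) decouples the determinant into $M_u\cdot\lvert Q\rvert$ (resp.\ $\lvert M\rvert\cdot Q_v$).
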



\begin{cor}
Let $G$ be a unicyclic graph on $n$ vertices. If $\lambda_1,\ldots
,\lambda_n$ are the Laplacian eigenvalues of $G$, then
$\lambda_1,\ldots ,\lambda_n$ are the Laplacian eigenvalues of $G'=
G\odot_{uv} G$.
\end{cor}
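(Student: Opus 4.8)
The plan is to read the factorization of $L_{G'}(\lambda)$ straight off Lemma~\ref{character}, specialized to the case $G_1=G_2=G$. First I would set $M=\lambda I-L(G_1)$ and $Q=\lambda I-L(G_2)$ as in that lemma; since both copies are the same graph $G$, the two characteristic polynomials coincide, $|M|=|Q|=L_G(\lambda)$, while $M_u$ and $Q_v$ are the principal minors obtained by deleting from $\lambda I-L(G)$ the row and column indexed by $u$ and by $v$, respectively. Substituting these into the formula $L_{G'}(\lambda)=M\cdot Q-M\cdot Q_v-Q\cdot M_u$ yields
\begin{equation*}
L_{G'}(\lambda)=L_G(\lambda)^2-L_G(\lambda)\,Q_v-L_G(\lambda)\,M_u=L_G(\lambda)\bigl(L_G(\lambda)-M_u-Q_v\bigr).
\end{equation*}

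The key step is simply to observe that this exhibits $L_G(\lambda)$ as a factor of $L_{G'}(\lambda)$. Hence every root of $L_G(\lambda)$ is a root of $L_{G'}(\lambda)$, and since the Laplacian eigenvalues of a graph are exactly the roots of its Laplacian characteristic polynomial (with multiplicity), each of $\lambda_1,\ldots,\lambda_n$ is a Laplacian eigenvalue of $G'$. A quick degree check confirms the bookkeeping: $L_G(\lambda)$ has degree $n$, the cofactor $L_G(\lambda)-M_u-Q_v$ has degree $n$ as well (the two minors having degree $n-1$, so they do not cancel the leading $\lambda^n$), and their product has degree $2n=|V(G')|$, as required.

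I do not expect a genuine obstacle here: the content of the corollary is essentially a one-line consequence of Lemma~\ref{character}. The only point needing care is the notational convention of that lemma, where $M$ and $Q$ denote the matrices $\lambda I-L(G_i)$ but are read as the determinants $|M|,|Q|$ in the product formula, whereas $M_u,Q_v$ are already the scalar minors introduced just above the statement. Once this is pinned down, the factorization is immediate and uses nothing particular to unicyclic graphs; indeed the same argument shows that the spectrum of an arbitrary graph $H$ embeds into that of $H\odot_{uv}H$. Should one wish to refine the claim to track multiplicities of shared eigenvalues, one would then examine the second factor $L_G(\lambda)-M_u-Q_v$, but for the stated conclusion this further analysis is unnecessary.
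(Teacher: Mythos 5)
Your proposal is correct and follows essentially the same route as the paper: specialize Lemma~\ref{character} to $G_1=G_2=G$ and factor $L_{G'}(\lambda)=L_G(\lambda)\bigl(L_G(\lambda)-M_u-Q_v\bigr)$, so every eigenvalue of $G$ is an eigenvalue of $G'$. The paper's proof is exactly this one-line factorization; your added degree check and notational remarks are fine but not needed.
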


\begin{proof}{
  Suppose $u_1,\ldots
,u_n$ and $v_1,\ldots ,v_n$ are the vertices of $G$ and the copy of
$G$, respectively. Let $G'=G\odot_{u_iv_j} G$ and $M =L_G(\lambda)$.
Therefore $L_{G'}(\lambda)=M^2-M.M_{v_j}-M.M_{u_i}$, by Equation
(\ref{laplac}). So $L_{G'}(\lambda)=M(M-M_{u_i}-M_{v_j})$ and the
result follows.}
\end{proof}
 In \cite{ mojdeh et al.}, we have shown the upper bound for
$m_G(\lambda)$, where $\lambda> 1$.

\begin{lem}\label{eqibi}\emph{\cite [Lemma~2]{mojdeh et al.}}
Let $G$ be a bicyclic graph and $\lambda> 1$ be an integral
eigenvalue of $L(G)$. It holds that $m_{G}(\lambda)\leq 3$.
\end{lem}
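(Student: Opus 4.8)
The plan is to compare $G$ with one of its spanning trees and to control how much the multiplicity can change when the two extra edges are removed. The argument rests on two facts: deleting a single edge perturbs the Laplacian by a rank-one positive semidefinite matrix, and in a tree every integer eigenvalue larger than $1$ is simple.

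First I would record the edge-deletion estimate. For $e=uw\in E(G)$ one has $L(G)=L(G-e)+bb^{t}$, where $b$ is the vector equal to $+1$ at $u$, $-1$ at $w$ and $0$ elsewhere. Since $bb^{t}$ is positive semidefinite of rank one, the eigenvalues of $L(G)$ and $L(G-e)$ interlace, whence $|m_{G}(\mu)-m_{G-e}(\mu)|\le 1$ for every real $\mu$. Because $G$ is connected and bicyclic, $|E(G)|=|V(G)|+1$, so removing two edges (one breaking each independent cycle) produces a spanning tree $T$. Applying the estimate twice yields $m_{G}(\lambda)\le m_{T}(\lambda)+2$, and it remains to prove $m_{T}(\lambda)\le 1$.

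For the tree bound I would root $T$ at an arbitrary vertex $r$ and propagate eigenvector values through Equation (\ref{one}). Sweeping from the leaves upward, the equations on the subtree $T_{v}$ rooted at a non-root vertex $v$ force the value $x_{p}$ of the parent $p$ of $v$ to satisfy $x_{p}=\alpha_{v}x_{v}$, where $\alpha_{v}=1-\lambda$ at a leaf and $\alpha_{v}=d(v)-\lambda-\sum_{c}\alpha_{c}^{-1}$ over the children $c$ of $v$ otherwise. The vanishing $\alpha_{v}=0$ is exactly the statement that $\lambda$ is an eigenvalue of the principal submatrix $\widetilde{L}_{v}=L(T_{v})+E_{vv}$, the Laplacian of $T_{v}$ with the vertex $v$ grounded (here $E_{vv}$ has a single $1$ in the diagonal position of $v$). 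The key point is that $\widetilde{L}_{v}$ admits no integer eigenvalue exceeding $1$: by the matrix-tree theorem $\det\widetilde{L}_{v}$ counts the spanning trees of $T_{v}$ with a pendant ground vertex attached at $v$, which equals $1$, so the characteristic polynomial of $\widetilde{L}_{v}$ is monic with integer coefficients and constant term $\pm 1$, and the rational root theorem leaves $1$ as its only possible integer eigenvalue. Hence for our integer $\lambda>1$ every non-root $\alpha_{v}$ is nonzero, each value $x_{v}=x_{p}/\alpha_{v}$ is determined by its parent, and the whole eigenvector is fixed by the single number $x_{r}$; therefore $m_{T}(\lambda)\le 1$ and $m_{G}(\lambda)\le 3$.

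The routine part is the interlacing step. The main obstacle is the tree base case, and within it the observation that the relevant principal submatrices are grounded Laplacians of determinant $1$; this is what converts the integrality hypothesis $\lambda>1$ into the simplicity $m_{T}(\lambda)\le 1$ through the rational root theorem. I would take care to verify rigorously the identification of $\alpha_{v}=0$ with $\lambda$ being an eigenvalue of $\widetilde{L}_{v}$ and the matrix-tree computation of $\det\widetilde{L}_{v}$, since these two facts carry the whole weight of the argument.
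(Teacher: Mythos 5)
Your argument is correct, but it takes a different route from the one the paper relies on. The paper does not prove this lemma here at all --- it imports it from \cite[Lemma~2]{mojdeh et al.} --- and the closest in-house argument is the final lemma of the paper ($m_G(\lambda)\le n+1$ for a graph with $n$ cycles), whose proof deletes \emph{one} cycle edge, invokes the edge-deletion interlacing theorem of Mohar, and lands on the unicyclic bound $m_{G'}(\lambda)\le 2$ cited from Akbari et al. You share the interlacing step but push it one edge further, down to a spanning tree, and then prove the base case $m_T(\lambda)\le 1$ from scratch: the transfer-coefficient recursion $\alpha_v=d(v)-\lambda-\sum_c\alpha_c^{-1}$, the identification of $\alpha_v=0$ with $\lambda$ being an eigenvalue of the grounded Laplacian $L(T_v)+E_{vv}$, the matrix-tree computation $\det(L(T_v)+E_{vv})=1$, and the rational root theorem. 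The bookkeeping works out identically ($1+2=2+1=3$), so nothing is lost quantitatively; what your version buys is self-containedness (no appeal to the unicyclic multiplicity theorem, which is itself a nontrivial result of Akbari--Kiani--Mirzakhah), at the cost of a longer base case. The two delicate points you flag --- that $\alpha_v=0$ exactly when $\det(\lambda I-L(T_v)-E_{vv})=0$, which requires the inductive nonvanishing of the children's $\phi_{c}$ to clear denominators, and the determinant-one computation --- are both sound as sketched, and the root vertex needs no separate treatment since injectivity of $x\mapsto x_r$ on the eigenspace already gives $m_T(\lambda)\le 1$.
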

Now, we identified some bicyclic graphs that having $2$ among their
Laplacian eigenvalues with multiplicity $3$.

 \begin{thm}\label{multi}\textbf{\emph{[Main Theorem]}} Let $G_1$
and $G_2$ be unicyclic graphs containing a perfect matching with
$m_{G_1}(2)= m_{G_2}(2)=2$. It holds that $m_{G_1\odot_{uv}
G_2}(2)=3$ where  $u\in C_{g_1}$ and $v\in C_{g_2}$.
\end{thm}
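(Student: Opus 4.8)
The plan is to read off the multiplicity of $2$ directly from the factorization of $L_G(\lambda)$ supplied by Lemma \ref{character}. Writing $M_u$ (resp.\ $Q_v$) for the principal minor of $\lambda I - L(G_1)$ (resp.\ $\lambda I - L(G_2)$) obtained by deleting the row and column of $u$ (resp.\ $v$), Equation (\ref{laplac}) reads
\[
L_G(\lambda) = L_{G_1}(\lambda)L_{G_2}(\lambda) - L_{G_1}(\lambda)\,Q_v - L_{G_2}(\lambda)\,M_u .
\]
Since $m_{G_1}(2)=m_{G_2}(2)=2$, each of $L_{G_1}$ and $L_{G_2}$ vanishes to order exactly $2$ at $\lambda=2$, so the first product contributes order $4$, and everything hinges on the orders of vanishing of the two minors $M_u$ and $Q_v$.

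First I would pin down the order of $M_u$ at $\lambda=2$ through the resolvent identity $M_u/L_{G_1}(\lambda) = \bigl[(\lambda I-L(G_1))^{-1}\bigr]_{uu}$. Expanding the diagonal resolvent entry over the spectrum of $L(G_1)$ shows its only possible singularity at $\lambda=2$ is a simple pole whose residue is $c_u:=\|P e_u\|^2\ge 0$, where $P$ is the orthogonal projection onto the eigenspace of $2$; equivalently, $c_u>0$ iff some eigenvector of $L(G_1)$ for $2$ is nonzero at $u$. Comparing with the order-$2$ zero of $L_{G_1}$ gives the dichotomy: $M_u$ vanishes to order exactly $1$ when $c_u>0$, and to order $\ge 2$ when $c_u=0$ (Cauchy interlacing guarantees order $\ge 1$ either way). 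The same analysis applies to $Q_v$ with residue $c_v\ge 0$.

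Next I would factor. Writing $L_{G_1}=(\lambda-2)^2 a$, $L_{G_2}=(\lambda-2)^2 b$, $M_u=(\lambda-2)^{s_u}\alpha$, $Q_v=(\lambda-2)^{s_v}\beta$ with $a(2),b(2),\alpha(2),\beta(2)\ne 0$, the residue computation yields $\alpha(2)=c_u a(2)$ when $s_u=1$ and $\beta(2)=c_v b(2)$ when $s_v=1$. If both $c_u,c_v>0$ then $s_u=s_v=1$ and
\[
L_G(\lambda)=(\lambda-2)^3\bigl[(\lambda-2)ab - a\beta - b\alpha\bigr],
\]
whose bracket at $\lambda=2$ equals $-a(2)b(2)(c_u+c_v)\ne 0$; the crucial point is that the two minor contributions carry the same sign (both residues are nonnegative) and therefore add instead of cancelling, so the order is exactly $3$. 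If exactly one residue is positive, the sole order-$3$ term survives with nonzero coefficient, again giving order $3$.

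The one remaining possibility, $c_u=c_v=0$, would force every term to have order $\ge 4$, i.e.\ $m_G(2)\ge 4$. But $G=G_1\odot_{uv}G_2$ is bicyclic (a single bridge joins the two unicyclic blocks) and $2>1$ is integral, so Lemma \ref{eqibi} gives $m_G(2)\le 3$, ruling this case out. Hence $m_G(2)=3$ in all cases. I expect the main obstacle to be the order computation for the minors together with the no-cancellation step; the sign control furnished by the nonnegativity of the resolvent residues is exactly what makes it work, while the hypotheses that $G_1,G_2$ have a perfect matching and that $u,v$ lie on the cycles provide the natural setting in which $m_{G_i}(2)=2$ holds and the relevant residues $c_u,c_v$ can be exhibited explicitly.
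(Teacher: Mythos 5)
Your proposal is correct, and it reaches the conclusion by a genuinely different route from the paper. The paper's proof also starts from Lemma \ref{character} and the observation that it suffices to show $(\lambda-2)\mid M_u$ and $(\lambda-2)\mid Q_v$, but it establishes that divisibility by hand: for $G_1=C_m$ it runs an induction on $m\equiv 0\ (\mathrm{mod}\,4)$, computing $M_u$ via a Schur-complement block decomposition (Lemma \ref{determin}), and for the general case it strips off pendant edges using the Grone--Merris--Sunder reduction until only the cycle case remains; it then gets $m_G(2)\geq 3$ from the factorization and closes with the upper bound $m_G(2)\leq 3$ of Lemma \ref{eqibi}. You instead obtain $(\lambda-2)\mid M_u$ in one stroke from the identity $M_u/L_{G_1}=\bigl[(\lambda I-L(G_1))^{-1}\bigr]_{uu}$ (equivalently, Cauchy interlacing applied to the principal submatrix), which bypasses the induction, the structural classification of Akbari et al., and the pendant-vertex reduction entirely; moreover your residue computation pins down the exact order of vanishing and shows the two minor terms cannot cancel, so you only need Lemma \ref{eqibi} to exclude the degenerate case $c_u=c_v=0$ rather than to supply the whole upper bound. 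A notable payoff of your argument is that it never uses the perfect-matching hypothesis or the assumption $u\in C_{g_1}$, $v\in C_{g_2}$: it proves the stronger statement that $m_{G_1\odot_{uv}G_2}(2)=3$ for arbitrary $u\in V(G_1)$, $v\in V(G_2)$ whenever $m_{G_1}(2)=m_{G_2}(2)=2$. What the paper's approach buys in exchange is an explicit, elementary description of the graphs involved and of the factor structure of the minors, at the cost of a considerably longer computation.
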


\begin{proof}{
If $m_{G_1}(2)= m_{G_2}(2)=2$, then there are two situations.
\begin{enumerate}

    \item If $G_1=C_m$ and $G_2=C_n$ then $m\equiv n\equiv 0(\mathrm{mod}\,4)$, by
\cite [Theorem~12]{akbari et al.}.

    \item  If $G_1=C(T^{(1)}_1,\ldots
,T^{(1)}_{g_1})$ and $G_2=C(T^{(2)}_1,\ldots ,T^{(2)}_{g_2})$ such
that $\sum^{k=g_1}_{k=1}|V(T^{(1)}_k)|=m$,
$\sum^{k=g_2}_{k=1}|V(T^{(2)}_k)|=n$ and there exists at least one
$i$  (or $j$) so that $|V(T^{(1)}_i)|\geq 3$  (or
$|V(T^{(2)}_j)|\geq 3)$. Let
$$s_i=|\{T^{(i)}_k:
|V(T^{(i)}_k)|  \,\,\mbox{is}\,\, \mbox{odd}; 1\leq k\leq g_i\}| \,\
; \,\ i=1,2,$$ so $s_i=g_i$ and $g_i\equiv 0\ (\mathrm{mod}\,4)$,
for $i=1,2$, by \cite [Theorem~13]{akbari et al.}.
\end{enumerate}
In addition, $G=G_1\odot{uv} G_2$ is a bicyclic graph so $m_G(2)\leq
3$, by Lemma \ref{eqibi}. On the other hand,
$$L_{G_1}(\lambda)=|M|=(\lambda-2)^2 f(\lambda)\quad ,\quad
L_{G_2}(\lambda)=|Q|=(\lambda-2)^2 g(\lambda).$$ According to
Equation (\ref{laplac}), it is enough to show that $M_u$ and $Q_v$
have $\lambda-2$ as a factor.

\textbf{Case $1$.} Let $G_1=C_m$, $G_2=C_n$ and $m\equiv n\equiv
0(\mathrm{mod}\,4)$.

We use induction on $m$. If $G_1=C_4$ then the result follows and so
the induction basis holds.
$$M_u=\begin{vmatrix}
  \lambda-2 & 1 & 0 \\
  1 & \lambda-2 & 1 \\
  0 & 1 & \lambda-2 \\
\end{vmatrix}=(\lambda-2)(\lambda^2-4\lambda+2).$$
Suppose in the cyclic graph $C_{m-4}$, $M_u(C_{m-4})$ with
$m-4\equiv 0(\mathrm{mod}\,4)$ has $\lambda-2$ as a factor.

\noindent Let $G_1=C_m$ and without loss of generality, assume that
$u=u_m$. So
$$M_u= \begin{vmatrix}
   M' & N \\
  N^T & Q' \\
\end{vmatrix}$$

\noindent such that
$$M'=\left(%
\begin{array}{cccc}
  \lambda-2 & 1 & 0 & 0 \\
  1 & \lambda-2 & 1 & 0 \\
  0 & 1 & \lambda-2 & 1 \\
  0 & 0 & 1 & \lambda-2 \\
\end{array}%
\right),$$

$$Q'=(q'_{ij})\quad ;\quad q'_{ij}=\begin{cases}
         \lambda-2, & \quad\,\, i=j; \\
      a_{ij}, & \quad\,\, i\neq j; \\
\end{cases} \qquad ; \qquad 1\leq i,j\leq m-5$$

and $$N_{4\times(m-5)}=(n_{ij})\quad ; \quad n_{ij}=
\begin{cases}
         1, & \quad\,\,i=4,j=1 ; \\
         0, & \quad\,\,o.w ; \\
\end{cases} \qquad; \qquad 1\leq i\leq 4 \quad and \quad 1\leq j\leq m-5.$$
$$$$
\noindent Therefore, $M_u=|M'||Q'-N^TM'^{-1}N|$, by Lemma \ref{det}.

$$$$

\noindent Also
$$[N^TM'^{-1}N]_{m-5\times m-5}=\left(%
\begin{array}{ccccc}
  \frac{M'_{4,4}}{|M'|} & 0 & \cdots & 0 & 0\\
  0 & \ddots & \ddots &  & 0 \\
  \vdots &\ddots  & 0 &\ddots &\vdots \\
  0 &  &\ddots & \ddots & 0 \\
  0 & 0 & \cdots & 0 & 0 \\
\end{array}%
\right).$$

\noindent Then $$Q'-N^TM'^{-1}N=(b'_{ij})\quad ;\quad
b'_{ij}=\begin{cases}
         \lambda-2- \frac{M'_{4,4}}{|M'|}, & \quad\,\, i=j=1; \\
         \lambda-2, & \quad\,\, i=j\neq 1; \\
      a_{ij}, & \quad\,\, i\neq j; \\
\end{cases} \qquad;\qquad 1\leq i,j\leq m-5.$$

\noindent Furthermore, $|Q'-N^TM'^{-1}N|=|Q'|+|H|$, such that

 $$H=(h_{ij})\quad;\quad h_{ij}=\begin{cases}
         -\frac{M'_{4,4}}{|M'|}, & \quad\,\, i=j=1; \\
        0, & \quad\,\, i=1\neq j; \\
         \lambda-2, & \quad\,\, i=j\neq 1; \\
      a_{ij}, & \quad\,\, i\neq 1,i\neq j; \\
\end{cases}\qquad;\qquad 1\leq i,j\leq m-5.$$

\noindent Consequently,
$$|Q'-N^TM'^{-1}N|=|Q'|-\frac{M'_{4,4}}{|M'|}Q'_{1,1}$$
$$\Rightarrow M_u=|M'||Q'|-M'_{4,4}Q'_{1,1}.$$
 On the other hand $$M'_{4,4}=\begin{vmatrix}
  \lambda-2 & 1 & 0\\
  1 & \lambda-2 & 1 \\
  0 & 1 & \lambda-2 \\
\end{vmatrix}=(\lambda-2)(\lambda^2-4\lambda+2).$$
Also, $|Q'_{(m-5)\times (m-5)}|$ according to the induction
hypothesis has $\lambda-2$ as a factor therefore, $M_u$ has
$\lambda-2$ as a factor. With a similar method, one can check that
$Q_v$ has $\lambda-2$ as a factor. Thus, $m_G(2)=3$ and we are done.

\textbf{Case $2$.} Let $G_1=C(T^{(1)}_1,\ldots ,T^{(1)}_{g_1})$ and
$G_2=C(T^{(2)}_1,\ldots ,T^{(2)}_{g_2})$ so $s_i=g_i$ and $g_i\equiv
0\ (\mathrm{mod}\,4)$, for $i=1,2$. In addition, there exists some
$i$ ($j$), such that $|V(T^{(1)}_i)|\geq 3$ ($|V(T^{(2)}_j)|\geq
3$). Let $u'\in V(T^{(1)}_i)$ and
 $d(u',u_i)=max_{x\in V(T^{(1)}_i)}d(x,u_i)$, where $u_i$ is the root
 of $T^{(1)}_i$. Since $G_1$ has a perfect matching, $u'$
is a pendent vertex and its neighbor, say $u''$, has degree $2$.
Thus, $G =(G\setminus\{u',u''\})\odot S_2$. Using \cite
[Theorem~2.5]{grone et al.} we obtain $m_G(2)
=m_{G\setminus\{u',u''\}}(2)$. So by repeating this method in the
graph $G\setminus\{u',u''\}$ and omitting all leaf vertices and
quasi leaf vertices of degree $2$ and using \cite
[Theorem~2.5]{grone et al.}, the obtained graph is two cycles like
graph in the case $1$. So $m_G(2)=3$ and the proof is complete. }
\end{proof}


\begin{example}
$G_1$ and $G_2$ are two unicyclic graphs with
$m_{G_1}(2)=m_{G_2}(2)=2$. Also, there exists at least one $k$ such
that $|T^{(i)}_k|\geq 3$ for $i=1$ or i=$2$. The bicyclic graph
$G_1\odot_{uv}G_2$ has $2$ among its Laplacian eigenvalues with
multiplicity $3$.

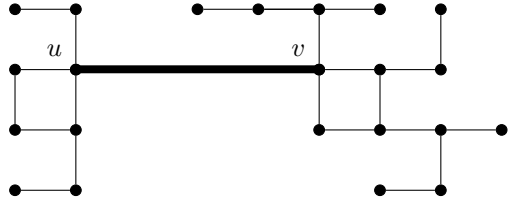
\begin{figure}[!ht]
  \[\begin{tikzpicture}[scale=.4,thin]
 \vertex[fill] (v3) at (-5,1.4)[label=north:$$] {};
 \vertex[fill] (v3) at (-5,1.4) [label=north west:$u$] {};
 \vertex[fill] (v4) at (-7,1.4)[label=north:$$] {};
 \vertex[fill] (v5) at (-5,-0.6)[label=south:$$] {};
 \vertex[fill] (v6) at (-7,-0.6)[label=south:$$] {};
 \vertex[fill](v7) at (3,1.4) [label=north:$$] {};
 \vertex[fill](v7) at (3,1.4) [label=north west:$v$] {};
 \vertex[fill](v8) at (5,1.4) [label=north:$$] {};
 \vertex[fill](v9) at (7,1.4) [label=north:$$] {};
 \vertex[fill](v10) at (5,-0.6) [label=north:$$] {};
 \vertex[fill](v11) at (7,-0.6) [label=north:$$] {};
 \vertex[fill] (v1) at (-1,3.4) [label=north:$$] {};
 \vertex[fill](v12) at (3,3.4) [label=north:$$] {};
 \vertex[fill](v13) at (-7,3.4) [label=north:$$] {};
 \vertex[fill](v14) at (-5,3.4) [label=north:$$] {};
 \vertex[fill](v15) at (-7,-2.6) [label=north:$$] {};
 \vertex[fill](v16) at (-5,-2.6) [label=north:$$] {};
 \vertex[fill](v17) at (3,-0.6) [label=north:$$] {};
 \vertex[fill](v18) at (7,3.4) [label=north:$$] {};
 \vertex[fill](v19) at (1,3.4) [label=north:$$] {};
 \vertex[fill](v20) at (7,-2.6) [label=south:$$] {};
 \vertex[fill](v21) at (5,-2.6) [label=south:$$] {};
 \vertex[fill](v22) at (9,-0.6) [label=south:$$] {};
\vertex[fill](v23) at (5,3.4) [label=south:$$] {};
 \path
 (v19) edge (v1)
 (v19) edge (v12)
 (v3) edge (v4)
 (v4) edge (v6)
 (v3) edge (v5)
 (v5) edge (v6)
 (v5) edge (v16)
 (v16) edge (v15)
 (v3) edge (v14)
 (v13) edge (v14)
 (v12) edge (v19)
 (v12) edge (v23)
 (v12) edge (v7)
 (v8) edge (v7)
 (v17) edge (v7)
 (v8) edge (v9)
 (v18) edge (v9)
 (v11) edge (v10)
 (v10) edge (v8)
 (v10) edge (v17)
 (v11) edge (v22)
 (v11) edge (v20)
 (v20) edge (v21);
 \draw[line width=3pt] (-5,1.41) .. controls (-1,1.41) .. (3,1.41);
\end{tikzpicture} \]
\caption{$G=G_1\odot_{uv} G_2$ with $m_{G}(2)=3$}
\end{figure}
\end{example}

The following example shows that the converse of Theorem \ref{multi}
does not necessarily true in general.

\begin{example}
Let $G$ be a unicyclic graph same as \emph{Figure $2$}. $G$ has $2$
among its Laplacian eigenvalues with multiplicity $1$. But
$G'=G\odot_{uv} G$ has $2$ among its Laplacian eigenvalues with
multiplicity $3$ (\emph{Figure $3$}).

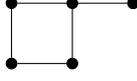
\begin{figure}[!ht]
  \[\begin{tikzpicture}[scale=.4,thin]
 \vertex[fill] (v2) at (-3,1.4) [label=south:$$] {};
 \vertex[fill] (v2) at (-3,1.4) [label=north:$$] {};
 \vertex[fill] (v3) at (-5,1.4)[label=north:$$] {};
 \vertex[fill] (v4) at (-7,1.4)[label=north:$$] {};
 \vertex[fill] (v5) at (-5,-0.6)[label=south:$$] {};
 \vertex[fill] (v6) at (-7,-0.6)[label=south:$$] {};

 \path
 (v2) edge (v3)
 (v3) edge (v4)
 (v4) edge (v6)
 (v3) edge (v5)
 (v5) edge (v6);
\end{tikzpicture} \]
\caption{$G$ with $m_G(2)=1$}
\end{figure}

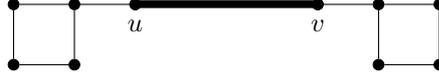
\begin{figure}[!ht]
  \[\begin{tikzpicture}[scale=.4,thin]
 \vertex[fill] (v2) at (-3,1.4) [label=south:$u$] {};
 \vertex[fill] (v2) at (-3,1.4) [label=north:$$] {};
 \vertex[fill] (v3) at (-5,1.4)[label=north:$$] {};
 \vertex[fill] (v4) at (-7,1.4)[label=north:$$] {};
 \vertex[fill] (v5) at (-5,-0.6)[label=south:$$] {};
 \vertex[fill] (v6) at (-7,-0.6)[label=south:$$] {};
 \vertex[fill](v7) at (3,1.4) [label=north:$$] {};
 \vertex[fill](v7) at (3,1.4) [label=south:$v$] {};
 \vertex[fill](v8) at (5,1.4) [label=north:$$] {};
 \vertex[fill](v9) at (7,1.4) [label=north:$$] {};
 \vertex[fill](v10) at (5,-0.6) [label=north:$$] {};
 \vertex[fill](v11) at (7,-0.6) [label=north:$$] {};

 \path
 (v2) edge (v3)
 (v3) edge (v4)
 (v4) edge (v6)
 (v3) edge (v5)
 (v5) edge (v6)
 (v8) edge (v7)
 (v8) edge (v9)
 (v11) edge (v9)
 (v11) edge (v10)
 (v10) edge (v8);
 \draw[line width=3pt] (-3,1.41) .. controls (0,1.41) .. (3,1.41);
\end{tikzpicture} \]
\caption{$G'=G\odot_{uv} G$ with $m_{G'}(2)=3$}
\end{figure}
\end{example}

 In \cite{mojdeh et al.} authors have considered a necessary and
sufficient condition in the bicyclic graph $G=G_1\odot_{uv} G_2$ for
having the Laplacian eigenvalue $2$, where $G_1$ and $G_2$ are
unicyclic graphs and have $2$ among their Laplacian eigenvalues. We
study the Laplacian eigenvalue $2$ of $G=G_1\odot_{uv} G_2$, where
$G_1$ and $G_2$ are unicyclic graphs and $G_1$ or $G_2$ dose not
have $2$ among its Laplacian eigenvalues by using Equation
(\ref{one}) without having the Laplacian characteristic polynomial.

\begin{thm}\label{laplac 2}
Let $G_1=C(T_1,\ldots ,T_{g_1})$ and $G_2=C(T_1,\ldots ,T_{g_2})$ be
unicyclic graphs such that $G_1$ has a perfect matching and $2$
among its Laplacian eigenvalues. Also, $G_2$ dose not have $2$ among
its Laplacian eigenvalues. Then $G=G_1\odot_{uv} G_2$ has $2$ among
its Laplacian eigenvalues if and only if $x(u)=0$.
\end{thm}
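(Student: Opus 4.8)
The plan is to use the eigenvalue equation (\ref{one}) directly, characterizing when $2$ is a Laplacian eigenvalue of $G=G_1\odot_{uv} G_2$ in terms of eigenvectors. Let $X=(X_1,X_2)^t$ be a candidate eigenvector for the eigenvalue $2$, where $X_1$ is the valuation on $V(G_1)$ and $X_2$ the valuation on $V(G_2)$. Since adding the bridge $e=uv$ changes only the degrees of $u$ and $v$ (each increases by $1$) and adds the single adjacency $u\sim v$, the equation (\ref{one}) for $G$ coincides with the eigenvalue-$2$ equation for $G_1$ at every vertex except $u$, and with that for $G_2$ at every vertex except $v$. First I would write out these perturbed equations explicitly: for all $w\in V(G_1)\setminus\{u\}$ we get $(d_{G_1}(w)-2)x(w)=\sum_{z\sim w}x(z)$ exactly as in $G_1$, while at $u$ the equation reads $(d_{G_1}(u)+1-2)x(u)=\sum_{z\in N_{G_1}(u)}x(z)+x(v)$, i.e. the $G_1$-equation at $u$ acquires the extra term $x(v)-x(u)$ on one side. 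Symmetrically at $v$.

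The forward direction assumes $x(u)=0$ and constructs an eigenvector. Here I would exploit the hypothesis that $G_1$ has $2$ as a Laplacian eigenvalue: take an eigenvector $Y$ of $L(G_1)$ for eigenvalue $2$. The key structural fact I want is that, because $G_1$ has a perfect matching and $m_{G_1}(2)\ge 1$ with $2$ integral, one can choose $Y$ supported off $u$ (or at least with $y(u)=0$), using the broken-sun / matching structure from the Grone--Merris-type result \cite[Theorem~2.5]{grone et al.} already invoked in Theorem \ref{multi}. Then setting $X=(Y,0)$ — the eigenvector $Y$ on $G_1$ and the zero vector on $G_2$ — should satisfy all of (\ref{one}) for $G$: on $G_2$ every equation is $0=0$ trivially; on $G_1\setminus\{u\}$ it is the $G_1$-equation; and at $u$ the extra term $x(v)-x(u)=0-0=0$ vanishes precisely because $x(u)=0$ and $x(v)=0$. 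So $X$ is a genuine eigenvector and $2\in\mathrm{spec}(L(G))$.

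For the converse, suppose $2$ is a Laplacian eigenvalue of $G$ with eigenvector $X=(X_1,X_2)$, and I must show $x(u)=0$. The restriction $X_2$ satisfies the $G_2$-equations at every vertex except $v$; I would argue that if $x(u)\neq 0$ then the coupling at the bridge forces $X_2$ to be a nonzero solution witnessing $2$ as an eigenvalue of $G_2$, contradicting the hypothesis that $2\notin\mathrm{spec}(L(G_2))$. Concretely, I expect to show that the $G_2$-side equations together with the bridge equation at $v$ can be reorganized so that $X_2$ behaves like an eigenvector (or generalized eigenvector) of $L(G_2)$ for eigenvalue $2$ whenever the ``input'' $x(u)$ from the bridge is nonzero; non-singularity of $2I-L(G_2)$ — which holds exactly because $2$ is not an eigenvalue of $G_2$ — then pins down $X_2$ and propagates a contradiction back to force $x(u)=0$.

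The main obstacle will be the converse, specifically making rigorous the claim that $x(u)\neq 0$ produces an eigenvalue-$2$ vector on the $G_2$ side. The bridge equation at $v$ reads $(d_{G_2}(v)+1-2)x(v)=\sum_{z\in N_{G_2}(v)}x(z)+x(u)$, so $X_2$ is \emph{not} an honest eigenvector of $L(G_2)$ unless $x(u)=0$; the term $x(u)$ acts as a nonhomogeneous source. I would handle this by writing the $G_2$ system as $(2I-L(G_2))X_2 = x(u)\,e_v$ where $e_v$ is the standard basis vector at $v$, and then using that $2I-L(G_2)$ is invertible to solve $X_2=x(u)(2I-L(G_2))^{-1}e_v$; feeding this back into the bridge equation at $u$ and the $G_1$ system, together with the perfect-matching structure of $G_1$ and the presence of the eigenvalue $2$ in $G_1$, should yield a scalar consistency condition that is violated unless $x(u)=0$. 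Verifying that this consistency condition genuinely forces $x(u)=0$ — rather than admitting a spurious nonzero solution — is the delicate point, and I would expect to lean on the explicit minor computations of the form $M_u$, $Q_v$ appearing in Lemma \ref{character} to close the argument.
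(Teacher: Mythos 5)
Your forward direction is essentially the paper's: when an eigenvector $X$ of $L(G_1)$ for the eigenvalue $2$ has $x(u)=0$, extending it by zero on $V(G_2)$ satisfies Equation (\ref{one}) at every vertex of $G$, since the extra bridge terms at $u$ and $v$ vanish. (One caveat: you suggest one can always \emph{choose} such an eigenvector using the perfect-matching structure and \cite[Theorem~2.5]{grone et al.}; that is false when $m_{G_1}(2)=1$, where the eigenvector is $\pm1$-valued and never vanishes at $u$. In that case the hypothesis $x(u)=0$ is simply never met, so the implication is vacuous; your construction is only needed, and only available, when such an $X$ exists.)

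The genuine gap is in the converse, and it sits exactly where you flagged it. First, your source term is wrong: the bridge equation at $v$ gives $(L(G_2)-2I)X_2=(x(u)-x(v))e_v$, not $x(u)e_v$; the degree increment at $v$ contributes a $-x(v)$ as well. More importantly, inverting $2I-L(G_2)$ and chasing a ``scalar consistency condition'' through the minors $M_u$, $Q_v$ of Lemma \ref{character} is not how the argument closes, and you never say how it would. The mechanism the paper uses --- and which your converse never invokes --- is the companion identity on the $G_1$ side, $(L(G_1)-2I)X_1=(x(v)-x(u))e_u$, paired against an eigenvector $Y$ of $L(G_1)$ for the eigenvalue $2$: symmetry of $L(G_1)$ gives $0=Y^t(L(G_1)-2I)X_1=(x(v)-x(u))\,y(u)$, so whenever $y(u)\neq 0$ one gets $x(u)=x(v)$. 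This is where the hypothesis that $2$ \emph{is} an eigenvalue of $G_1$ enters; your converse uses only the $G_2$ hypothesis. Once $x(u)=x(v)$, the source term on the $G_2$ side vanishes, $X_2$ satisfies $(L(G_2)-2I)X_2=0$ honestly, and $2\notin\mathrm{spec}(L(G_2))$ forces $X_2=0$, hence $x(v)=x(u)=0$ and then $X_1$ itself lies in $\ker(L(G_1)-2I)$ and vanishes at $u$ --- which is exactly the dichotomy the theorem asserts. No inversion of $2I-L(G_2)$ and no determinant computation is needed; without this orthogonality (Fredholm-alternative) step or an equivalent substitute, your converse does not conclude.
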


\begin{proof}{
 Let $G_1$ be a unicyclic graph containing a perfect matching such
that has $2$ among its Laplacian eigenvalues. So $s\equiv 0\
(mod\,4)$, where $s$ is the number of trees of odd orders in $G_1$,
by \cite [Theorem~9]{akbari et al.}.

\textbf{Case $1$.} If $g_1\not\equiv 0\ (mod\,4)$ or $g_1\equiv 0\
(mod\,4)$ and $s\neq g$ then $m_{G_1}(2)=1$ by \cite
[Theorem~13]{akbari et al.}
 and there exists the eigenvector corresponding to the
eigenvalue $2$ like $Y=(x_1,\ldots ,x_{n_1})^t$, such that $x_i\in
\{-1,1\}$ for $ i=1\cdots n_1$, by \cite [Theorem~4]{mojdeh et al.},
so $x(u)\neq 0$. By contrary, if $G$ has $2$ among its Laplacian
eigenvalues, then we can assume that $X=(x_1,\ldots ,x_{n_1+n_2})^t$
is an eigenvector of $L(G)$ corresponding to the eigenvalue $2$. All
vertices of $G$ satisfy in Equation (\ref{one}). Therefore
\begin{align*}
&\qquad \qquad \quad (d_G(u)-2)x(u)=\sum_{v_{j}\in N_G(u)} x(v_j)\\
& \Rightarrow (d_{G_1}(u)-2)x(u)+x(u)=\sum_{v_{j}\in N_{G_1}(u)} x(v_j)+x(v).\
\end{align*}
On the other hand, $G_1$ has $2$ among its Laplacian eigenvalues.
So, we have $x(v)=x(u)\neq 0$. Also
\begin{align*}
&\qquad  (d_G(v)-2)x(v)=\sum_{v_{j}\in N_G(v)} x(v_j)\\
&\Rightarrow (d_{G_2}(v)-2)x(v)+x(v)=\sum_{v_{j}\in N_{G_2}(v)} x(v_j)+x(u)\\
& \Rightarrow (d_{G_2}(v)-2)x(v)=\sum_{v_{j}\in N_{G_2}(v)}
x(v_j).\\
\end{align*}
Thus, by noting the fact that $d_{G_2}(w)=d_G(w)$ for the other
vertices of $G_2$, we have
$$(d_{G_2}(w)-2)x_{G_2}(w)=\sum_{{v_{i}}\in
N_{G_2}(w)}x(v_{i})\qquad \forall\, w\in V(G_2).$$ Hence, there
exist an eigenvector opposed to zero of $L(G_2)$ corresponding to
the eigenvalue $2$ and this is a contraction. Therefore, the proof
is complete.

\textbf{Case $2$.} If $s=g_1$ and $g_1\equiv 0\ (mod\,4)$ then
$m_{G_1}(2)=2$ by \cite [Theorem~13]{akbari et al.} and there exists
an eigenvector of $L(G_1)$ like $X=(x_1,\ldots ,x_{n_1})^t$
corresponding to the eigenvalue $2$, such that $x_i\in \{-1,0,1\}$.
For proving the ``if part'', let $x(u)=0$. Then by assigning $0$ to
all vertices of $G_2$, one can easily check that $Y=(X,0)^t$ is an
eigenvector of $L(G)$ corresponding to the eigenvalue $2$ (all
vertices of $G$ satisfy in Equation (\ref{one})). Conversely,
suppose $Y$ is an eigenvector of $L(G)$ corresponding to the
eigenvalue $2$ such that $y(u)\neq 0$. By a similar method in case
$1$, an eigenvector like $X\neq 0$ exists such that satisfy in
Equation (\ref{one}) for $\lambda=2$ in $L(G_2)$ and this is a
contradiction. Therefore, the result follows.}
\end{proof}
\begin{thm}\label{sun graph}
Let $G_1$ be a broken sun graph of order $n_1$ which has no perfect
matching and has $2$ among its Laplacian eigenvalues. Also, $G_2$ is
a unicyclic graph of order $n_2$ that does not have $2$ among its
Laplacian eigenvalues. So, $G=G_1\odot_{uv} G_2$ has $2$ among its
Laplacian eigenvalues if and only if $x(u)=0$.
\end{thm}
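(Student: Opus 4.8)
The plan is to mirror the proof of Theorem \ref{laplac 2}, working throughout with the eigenvalue equation (\ref{one}) at $\lambda = 2$ and never passing to characteristic polynomials. As in that theorem I read the condition ``$x(u) = 0$'' as: \emph{there is an eigenvector of $L(G_1)$ associated with $2$ whose valuation vanishes at $u$}. The one external ingredient I would invoke is a description of the eigenvectors of $L(G_1)$ for the eigenvalue $2$ when $G_1$ is a broken sun graph with no perfect matching, extracted from the structure theorems of \cite{akbari et al.} and the $\{-1,0,1\}$-valuation of \cite{mojdeh et al.}. For any candidate eigenvector of $G$ I split its coordinates as $Y = (Y_1, Y_2)^t$ according to $V(G_1)$ and $V(G_2)$, and I record the only two places where forming $G = G_1 \odot_{uv} G_2$ alters a degree or a neighbourhood: $d_G(u) = d_{G_1}(u) + 1$ with $v \in N_G(u)$, and $d_G(v) = d_{G_2}(v) + 1$ with $u \in N_G(v)$.

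For the ``if'' direction I would take an eigenvector $X$ of $L(G_1)$ for $2$ with $x(u) = 0$ and set $Y = (X, 0)^t$, assigning $0$ to every vertex of $G_2$. Equation (\ref{one}) at $\lambda = 2$ holds automatically at each vertex of $G_1 \setminus \{u\}$ and each vertex of $G_2 \setminus \{v\}$, since neither the degree nor the neighbourhood changes there. At $u$ both sides reduce to $0$: the equation for $G_1$ gives $\sum_{v_j \in N_{G_1}(u)} x(v_j) = (d_{G_1}(u) - 2)x(u) = 0$, and the extra neighbour contributes $x(v) = 0$. At $v$ both sides vanish since all $G_2$-values are $0$ and $x(u) = 0$. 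Hence $Y \ne 0$ is an eigenvector of $L(G)$ for $2$.

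For the ``only if'' direction I would assume $G$ carries the eigenvalue $2$ with eigenvector $Y = (Y_1, Y_2)^t$ and translate (\ref{one}) into the residual identities $(L(G_1) - 2I)Y_1 = (y(v) - y(u))e_u$ and $(L(G_2) - 2I)Y_2 = (y(u) - y(v))e_v$, where $e_u, e_v$ are the coordinate vectors at $u, v$; these say that $Y_1, Y_2$ satisfy (\ref{one}) for $G_1, G_2$ everywhere except at $u, v$, with the indicated defect there. I then split on whether $y(u) = y(v)$. If $y(u) \ne y(v)$, the first identity has nonzero right-hand side, so $e_u$ lies in the range of the symmetric matrix $L(G_1) - 2I$, i.e. $e_u \perp \ker(L(G_1) - 2I)$, which forces every eigenvector of $L(G_1)$ for $2$ to vanish at $u$; in particular $x(u) = 0$. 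If instead $y(u) = y(v)$, the second identity becomes $(L(G_2) - 2I)Y_2 = 0$, and since $G_2$ has no Laplacian eigenvalue $2$ the matrix $L(G_2) - 2I$ is nonsingular, so $Y_2 = 0$; then $y(v) = 0$, hence $y(u) = 0$, and the first identity reduces to $(L(G_1) - 2I)Y_1 = 0$ with $Y_1 \ne 0$, exhibiting an eigenvector of $L(G_1)$ for $2$ that vanishes at $u$. In both cases $x(u) = 0$, which together with the ``if'' part yields the equivalence.

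The transfer argument above is robust and uses only the symmetry of $L(G_1) - 2I$, the hypothesis that $G_1$ carries $2$, and the hypothesis that $G_2$ does not; so I expect the main obstacle to be the single graph-theoretic step rather than the linear algebra. The hypotheses ``broken sun graph'' and ``no perfect matching'' enter precisely in understanding the eigenvectors of $L(G_1)$ for $2$ and their values at the chosen cycle vertex $u$ — this is where ``$x(u) = 0$'' acquires a combinatorial meaning and where the statement is seen to be non-vacuous. I would develop this step most carefully, via the pendant/support recursion around the cycle (the relation $x(c) = -x(p)$ at a leaf $p$ with support $c$, and $x(c_{\text{left}}) + x(c_{\text{right}}) = 0$ at a degree-two cycle vertex), and by deleting leaf/quasi-leaf pairs together with an appeal to \cite[Theorem~2.5]{grone et al.}, exactly as in Case $2$ of Theorem \ref{multi}.
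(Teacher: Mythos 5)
Your proposal is correct, and its overall shape (work entirely with Equation (\ref{one}) at $\lambda=2$; extend an eigenvector of $L(G_1)$ by zero for the ``if'' part; derive a contradiction with $G_2$ not carrying $2$ for the ``only if'' part) matches the paper's. The ``if'' direction is essentially verbatim the paper's argument. Where you genuinely diverge is the ``only if'' direction: the paper simply points back to Case~1 of Theorem~\ref{laplac 2}, whose key step asserts $x(v)=x(u)$ from the bare fact that ``$G_1$ has $2$ among its Laplacian eigenvalues'' --- i.e.\ it tacitly assumes the restriction of the big eigenvector to $G_1$ already satisfies the $G_1$-eigenvalue equation at $u$, which is not automatic. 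Your residual identities $(L(G_1)-2I)Y_1=(y(v)-y(u))e_u$ and $(L(G_2)-2I)Y_2=(y(u)-y(v))e_v$, together with the case split on $y(u)=y(v)$ and the observation that for the symmetric matrix $L(G_1)-2I$ the range is the orthogonal complement of the kernel, replace that step with a watertight argument and in fact repair the gap; your version also makes explicit that the only hypotheses really used in the transfer are ``$2\in\mathrm{Spec}(L(G_1))$'' and ``$2\notin\mathrm{Spec}(L(G_2))$'', the broken-sun and no-perfect-matching assumptions serving only to pin down the eigenvector of $L(G_1)$ (the paper's $0,1,0,-1$ pattern from \cite{akbari et al.}) and hence to give ``$x(u)=0$'' a concrete, well-defined meaning. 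The one point to be careful about is precisely that last interpretive issue: your Case~1 shows \emph{every} $2$-eigenvector of $L(G_1)$ vanishes at $u$ while your Case~2 only exhibits \emph{one}, so your existential reading of ``$x(u)=0$'' is the statement you actually prove; this coincides with the paper's intended reading whenever $m_{G_1}(2)=1$ (which is the situation for the broken sun graphs at hand), and you should say so explicitly.
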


\begin{proof}{
First, assume that $g_1\equiv 0  \ (mod\,4)$ and there exist odd
number of vertices of degree 2 between any pair of consecutive
vertices of degree $3$, by \cite [Theorem~10]{akbari et al.}. We may
assign $\{-1, 0, 1\}$ to the vertices of $C_{g_1}$, by the pattern
$0, 1, 0,-1$ consecutively starting with a vertex of degree $3$, and
assign to each pendent vertex the negative of value of its neighbor,
to obtain an eigenvector of $L(G)$ corresponding to the eigenvalue
$2$ like $X=(x_1,\ldots,x_{n_1})^t$. Now, if $x(u)=0$ then by
assigning $0$ to all vertices of $G_2$, one can easily check that
$Y=(X,0)^t$ is an eigenvector of $L(G)$ corresponding to the
eigenvalue $2$ (all vertices of $G$ satisfy in Equation
(\ref{one})). Conversely, suppose $X$ is an eigenvector of $L(G)$
corresponding to the eigenvalue $2$ such that $x(u)\neq 0$. By a
similar method in Theorem \ref{laplac 2} case $1$, there exists an
eigenvector like $Z\neq 0$ such that $Z$ satisfy in Equation
(\ref{one}) for $\lambda=2$ in $L(G_2)$ and this is a contradiction.
Therefore, the proof is complete.}
\end{proof}

\begin{example}\label{one eigenvalue}
In the following \emph{Figure $(4)$}, $G_1$ and $G_2$ are broken
graphs such that just one of them has $2$ among its Laplacian
eigenvalues. By Theorem \ref{sun graph} and by Equation (\ref{one}),
$$X=(-1,0,0,1,0,0,0,0,0,0,0)^t$$ is an eigenvector of $L(G)$
corresponding to the eigenvalue $2$.
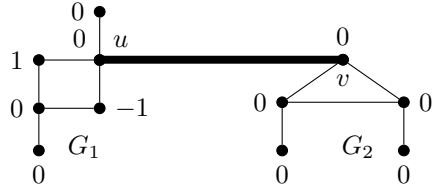
\begin{figure}[!ht]\label{3}
  \[\begin{tikzpicture}[scale=.4,thin]
 \vertex[fill] (v1) at (-5,1.4)[label=north west:$0$] {};
 \vertex[fill] (v1) at (-5,1.4)[label=north east:$u$] {};
 \vertex[fill] (v2) at (-7,1.4)[label=left:$1$] {};
 \vertex[fill] (v3) at (-5,3.0)[label=left:$0$] {};
 \vertex[fill] (v4) at (-5,-0.2)[label=right:$-1$] {};
 \vertex[fill] (v5) at (-7,-1.6)[label=south:$0$] {};
 \vertex[fill] (v6) at (-7,-0.2)[label=left:$0$] {};
 \vertex[fill] (v8) at (5,0.) [label=right:$0$] {};
 \vertex[fill] (v9) at (5,-1.6) [label=south:$0$] {};
 \vertex[fill](v10) at (3,1.41) [label=north:$0$] {};
 \vertex[fill](v10) at (3,1.41) [label=south:$v$] {};
 \vertex[fill](v11) at (1,0)[label=left:$0$]{};
  \vertex[fill](v12) at (1,-1.6)[label=south:$0$]{};
 \path
 (v1) edge (v2)
 (v1) edge (v3)
 (v2) edge (v6)
 (v4) edge (v6)
 (v6) edge (v5)
 (v1) edge (v4)
 (v1) edge (v10)
 (v10) edge (v8)
 (v10) edge (v11)
 (v8) edge (v11)
 (v8) edge (v9)
 (v11) edge (v12)
 ;

 \draw[line width=3pt] (-5,1.41) .. controls (-1,1.41) .. (3,1.41);
\draw (-3.5,-.4) ++(-2,-.4) node[anchor=north]{$G_1$} (-.5,-.4); \draw
(1,-.4) ++(2.5,-.4) node[anchor=north]{$G_2$} (4,-.4);
\end{tikzpicture} \]
\caption{$G=G_1\odot G_2$ has $2$ as its Laplacian eigenvalue. }
\end{figure}
\end{example}

In the follow, we show that two broken sun graphs $G_1$ and $G_2$ do
not have $2$ among their Laplacian eigenvalues, but $G=G_1\odot_{uv}
G_2$ may have $2$ among Laplacian eigenvalues.

\begin{thm}\label{eigenvector}
Let $G_1\in \mathfrak{U}_{n_1,g_1} $ and $G_2\in
\mathfrak{U}_{n_2,g_2}$, and $m_1$, $m_2$ be the numbers of leaf
vertices in $G_1$ and $G_2$ for which $m_1=g_1-1$ and $m_2=g_2-1$.
Let $d_{G_1}(u)=d_{G_2}(v)=2$. Then $L(G=G_1\odot_{uv} G_2)$ has $2$
among its Laplacian eigenvalues.
\end{thm}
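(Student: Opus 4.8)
The plan is to exhibit an explicit eigenvector for the eigenvalue $2$ and verify it directly through Equation (\ref{one}), rather than through any characteristic-polynomial computation. First I would read off the structure pinned down by the hypotheses, in the broken sun setting of this section. Since $G_1$ is unicyclic of girth $g_1$ with $m_1=g_1-1$ leaf vertices and $d_{G_1}(u)=2$, the graph $G_1$ is the broken sun graph obtained from the cycle $C_{g_1}$ by attaching exactly one pendant leaf to every cycle vertex except $u$; thus $u$ is the unique degree-$2$ vertex on the cycle, every other cycle vertex has degree $3$, each pendant is a leaf, and $n_1=2g_1-1$. The same description holds for $G_2$ with $v$ in place of $u$. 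In $G=G_1\odot_{uv}G_2$ the only structural change is that $u$ and $v$ each acquire the new neighbor across $e=uv$, so both pass from degree $2$ to degree $3$.

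I would label the cycle of $G_1$ as $u=u_0,u_1,\ldots,u_{g_1-1}$, writing $p_i$ for the pendant attached to $u_i$ when $i\neq 0$, and label $G_2$ analogously with $v=w_0,w_1,\ldots,w_{g_2-1}$ and pendants $q_j$. Then I would define $X$ by assigning value $1$ to every cycle vertex $u_i$ of $G_1$, value $-1$ to every pendant $p_i$, value $-1$ to every cycle vertex $w_j$ of $G_2$, and value $1$ to every pendant $q_j$. The motivation for this choice is that at $\lambda=2$ the interior equation $2x(u_i)=x(u_{i-1})+x(u_{i+1})+x(p_i)$, combined with the leaf relation $x(p_i)=-x(u_i)$, forces the values along each cycle to be constant; the sign reversal between the two cycles is exactly what balances the junction.

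Next I would check Equation (\ref{one}) with $\lambda=2$ at each vertex type. At a pendant $p_i$ one has $(1-2)(-1)=1=x(u_i)$, and symmetrically for $q_j$; at an interior cycle vertex $u_i$ with $i\neq 0$ one has $(3-2)(1)=1$ while the neighbor sum is $x(u_{i-1})+x(u_{i+1})+x(p_i)=1+1-1=1$, and likewise on the $G_2$ side. The only equations touching the new edge are those at $u$ and $v$: at $u=u_0$, now of degree $3$, the left side is $(3-2)(1)=1$ and the neighbor sum is $x(u_1)+x(u_{g_1-1})+x(v)=1+1+(-1)=1$, while at $v=w_0$ the left side is $(3-2)(-1)=-1$ with neighbor sum $x(w_1)+x(w_{g_2-1})+x(u)=-1-1+1=-1$. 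Since all equations hold and $X\neq 0$, the vector $X$ is an eigenvector of $L(G)$ for the eigenvalue $2$.

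I expect no deep obstacle: the entire content is choosing the signs so that the two junction equations at $u$ and $v$ close up. The single point demanding care is precisely this sign bookkeeping — the two cycles must carry opposite constant values so that the contribution of the extra neighbor across $e=uv$ cancels the excess created when $u$ and $v$ move from degree $2$ to degree $3$. Getting that cancellation right is the crux, and it is what makes $2$ appear as a Laplacian eigenvalue of $G$ even though, as remarked just before the statement, neither $G_1$ nor $G_2$ carries it.
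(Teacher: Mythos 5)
Your proposal is correct and follows essentially the same route as the paper: both exhibit the eigenvector that is $+1$ on the cycle and $-1$ on the pendants of $G_1$, with the signs reversed on $G_2$, and verify Equation (\ref{one}) at $\lambda=2$. The only difference is that you carry out the vertex-by-vertex verification (and the identification of the broken-sun structure forced by $m_i=g_i-1$ and $d(u)=d(v)=2$) explicitly, where the paper leaves it as ``one may check.''
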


\begin{proof}{
Let $G_1\in \mathfrak{U}_{n_1,g_1} $ and $G_2\in
\mathfrak{U}_{n_2,g_2}$. We may assign $1$ and $-1$ to the vertices
of $C_{g_1}$ and $C_{g_2}$, and assign $-1$ and $1$ to each leaf
vertices of $G_1$ and $G_2$, respectively. One may check that every
vertices of $G$ satisfies in Equation (\ref{one}). Therefore
$X=(x_1, \ldots ,x_{n_1+n_2})$ is an eigenvector  corresponding to
the Laplacian eigenvalue $2$, such that $x_i\in \{-1,1\} $,
\emph{for} $i=1,\ldots, n_1+n_2$ and we are done.  }
\end{proof}
\begin{example}
Let $G_1$ and $G_2$ be broken sun graphs (see Figure $5$), then
$G=G_1\odot_{uv} G_2$ has $2$ among its Laplacian eigenvalues by
Theorem \ref{eigenvector}.

\begin{figure}[!ht]\label{4}
  \[\begin{tikzpicture}[scale=.4,thin]
 \vertex[fill] (v1) at (-5,1.4)[label=north:$1$] {};
 \vertex[fill] (v1) at (-5,1.4)[label=south east:$u$] {};
 \vertex[fill] (v2) at (-7,1.4)[label=left:$1$] {};
 \vertex[fill] (v3) at (-7,3.4)[label=left:$-1$] {};
 \vertex[fill] (v4) at (-5,-0.6)[label=right:$1$] {};
 \vertex[fill] (v5) at (-5,-2.6)[label=south:$-1$] {};
 \vertex[fill] (v6) at (-7,-0.6)[label=left:$1$] {};
  \vertex[fill] (v7) at (-7,-2.6)[label=south:$-1$] {};
 \vertex[fill] (v8) at (5,0) [label=right:$-1$] {};
 \vertex[fill] (v9) at (5,-2) [label=south:$1$] {};
 \vertex[fill](v10) at (3,1.41) [label=north:$-1$] {};
 \vertex[fill](v10) at (3,1.41) [label=south:$v$] {};
 \vertex[fill](v11) at (1,0)[label=left:$-1$]{};
  \vertex[fill](v12) at (1,-2)[label=south:$1$]{};
 \path
 (v1) edge (v2)
 (v2) edge (v3)
 (v2) edge (v6)
 (v4) edge (v5)
 (v6) edge (v7)
 (v1) edge (v4)
 (v4) edge (v6)
 (v1) edge (v10)
 (v10) edge (v8)
 (v10) edge (v11)
 (v8) edge (v11)
 (v8) edge (v9)
 (v11) edge (v12)
 ;

 \draw[line width=3pt] (-5,1.41) .. controls (-1,1.41) .. (3,1.41);
\draw (-3.5,-2) ++(-2,-2) node[anchor=north]{$G_1$} (-.5,-2); \draw
(1,-2) ++(2.5,-2) node[anchor=north]{$G_2$} (4,-2);
\end{tikzpicture} \]
\caption{$G=G_1\odot_{uv} G_2$,$\lambda=2$ is an eigenvalue of
$L(G)$.}
\end{figure}
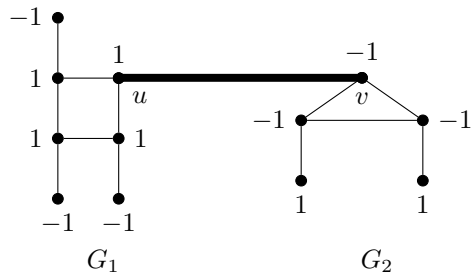
\end{example}


 Now, we finish this article with
extend this result for graphs with $n$ cycles.

\begin{lem} Let $G$ be a graph with $n$ cycles and
$\lambda> 1$ be an integral Laplacian eigenvalue of $G$. Therefore,
$m_{G}(\lambda)\leq n+1$.
\end{lem}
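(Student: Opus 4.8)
The plan is to argue by induction on the number $n$ of independent cycles of $G$, that is, on the cyclomatic number $|E(G)|-|V(G)|+1$ of the connected graph $G$, decreasing $n$ by deleting one edge that lies on a cycle. The whole induction rests on a single rank-one perturbation observation for the Laplacian: if $e=xy\in E(G)$ and $b_e\in\mathbb{R}^{|V(G)|}$ is its signed incidence vector (entry $+1$ at $x$, $-1$ at $y$, and $0$ elsewhere), then
\[
L(G)=L(G-e)+b_e b_e^{t},
\]
so that $L(G)$ is obtained from $L(G-e)$ by adding a positive semidefinite matrix of rank $1$. By Cauchy interlacing for rank-one perturbations the eigenvalues of $L(G-e)$ and of $L(G)$ interlace, whence $|m_{G}(\mu)-m_{G-e}(\mu)|\le 1$ for every real $\mu$, and in particular $m_{G}(\lambda)\le m_{G-e}(\lambda)+1$.

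For the inductive step I would choose any edge $e$ lying on a cycle of $G$. Such an $e$ is not a bridge, so $G-e$ is again connected, and its cyclomatic number is exactly $n-1$; thus $G-e$ is a connected graph with $n-1$ cycles. Applying the induction hypothesis, $m_{G-e}(\lambda)\le(n-1)+1=n$, and combining with the perturbation inequality gives $m_{G}(\lambda)\le n+1$, as desired. Note that the interlacing step is completely insensitive to the size or integrality of $\lambda$; those hypotheses are consumed entirely by the base case.

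For the base of the induction the natural anchor is the tree (acyclic) case $n=0$, where the statement asserts that every integral $\lambda>1$ is a \emph{simple} Laplacian eigenvalue, $m_{G}(\lambda)\le 1$; the already-proved bicyclic instance (Lemma~\ref{eqibi}, $n=2$, bound $3$) and the unicyclic instance ($n=1$, bound $2$) are then recovered as the first two steps of the induction and serve as consistency checks. This base case is precisely where both hypotheses are indispensable: for $\lambda=1$ a star $K_{1,t}$ has $m_{G}(1)=t-1$, so the bound cannot survive removing the condition $\lambda>1$.

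To settle the tree case I would root $T$ and exploit Equation~(\ref{one}) directly. At a leaf $v$ with neighbour $p$ it reads $(1-\lambda)x(v)=x(p)$, so $x(v)$ is determined by $x(p)$ because $\lambda\neq 1$; propagating upward, one shows by induction on the subtrees that each coordinate satisfies $x(v)=c_v\,x(p(v))$, where $c_v=\bigl(d(v)-\lambda-\sum_{w}c_w\bigr)^{-1}$ and $w$ ranges over the children of $v$. If no denominator vanishes, the entire eigenvector is pinned down by the single value at the root, giving $\dim\ker(L(T)-\lambda I)\le 1$. I expect the main obstacle to lie exactly here: at an internal ``resonance'' where $d(v)-\lambda-\sum_{w}c_w=0$ the recurrence breaks, the parent value is forced to $0$, and an extra degree of freedom could in principle appear. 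The crux is therefore to prove that for \emph{integral} $\lambda>1$ such resonances cannot accumulate so as to raise the eigenspace dimension above $1$; this can be done by analysing the continued-fraction recurrence for $c_v$ (whose vanishing denominators correspond to $\lambda$ being a grounded-Laplacian eigenvalue of a proper subtree), or by invoking the known simplicity of integral eigenvalues greater than $1$ for trees. Once the tree case is secured, the rank-one induction closes the argument without further difficulty.
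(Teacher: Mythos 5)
Your inductive step is exactly the paper's: delete an edge $e$ lying on a cycle, observe that $L(G)=L(G-e)+b_eb_e^{t}$ is a positive semidefinite rank-one perturbation, and conclude $m_{G}(\lambda)\le m_{G-e}(\lambda)+1$ by interlacing (the paper invokes this as \cite[Theorem~3.2]{mohar}). The only divergence is the anchor. The paper starts the induction at $n=1$, quoting \cite[Lemma~4]{akbari et al.} for the fact that an integral Laplacian eigenvalue $\lambda>1$ of a unicyclic graph has multiplicity at most $2$, whereas you push down to the tree case $n=0$ and need that such a $\lambda$ is simple for trees. That statement is true --- it is a theorem of Grone, Merris and Sunder, i.e.\ reference \cite{grone et al.} of this paper --- so your route closes as soon as you cite it; what buys you nothing is your from-scratch sketch of the tree case, which is not yet a proof: the ``resonance'' phenomenon you flag (vanishing denominators in the continued-fraction recurrence, equivalently $\lambda$ being an eigenvalue of a grounded Laplacian of a proper subtree) is exactly the hard point, and the integrality of $\lambda$ enters there in a non-obvious way. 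Either cite the known tree result or start one level up at the unicyclic case as the paper does; with either patch the argument is complete, and your version has the mild advantage of recovering the unicyclic and bicyclic bounds (Lemma~\ref{eqibi}) as instances rather than inputs. One point worth making explicit in both your write-up and the paper's: ``$n$ cycles'' must be read as the cyclomatic number $|E|-|V|+1$, since that is the quantity that drops by exactly one when a cycle edge is deleted (a theta graph has cyclomatic number $2$ but three distinct cycles).
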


\begin{proof}{
If $G$ be a unicyclic graph then $m_{G}(\lambda)\leq 2$, by \cite
[Lemma~4] {akbari et al.}. By induction on $n$, let
$m_{G}(\lambda)\leq n$ for all graphs with cycles less than $n$. The
graph $G'=G-e$, where $e$ belongs to one of the cycles of $G$, is a
graph with $n-1$ cycles and by using the induction assumption
$m_{G'}(\lambda)\leq n$. If $m_{G}(\lambda)> n+1$ then
$m_{G'}(\lambda)> n$, by \cite [Theorem~3.2]{mohar} and this
contradicts. So, the result follows.}
\end{proof}


    \bibliographystyle{siamplain}

    \bibliography{references}

\end{document}